\newtheorem{lemma}{Lemma}
\newtheorem{theorem}{Theorem}
\newcommand {\ve} {\varepsilon}
\def\blfootnote{\xdef\@thefnmark{}\@footnotetext}\makeatother
\title[Additive Energy and Pair correlations]{\bf Additive Energy and the Hausdorff dimension of the exceptional set in metric pair correlation problems}
\author[Aistleitner]{Christoph Aistleitner}
\address{Institute of Analysis and Number Theory, TU Graz, Austria}
\email{aistleitner@math.tugraz.at}
\author[Larcher]{Gerhard Larcher}
\address{Institute of Financial Mathematics and applied Number Theory, University Linz, Austria}
\email{gerhard.larcher@jku.at}
\author[Lewko]{Mark Lewko.\\ With an appendix by Jean Bourgain}
\address{Department of Mathematics, University of California, Los Angeles, USA}
\email{mlewko@gmail.com}
\keywords{Pair correlations; additive energy; Diophantine approximation; Poisson statistics; fractional parts; additive combinatorics}
\subjclass[2010]{11K55; 11B30; 11B13; 11J54; 11J71; 11K60}
\thanks{The first author is supported by the Austrian Science Fund (FWF) by an FWF Schr\"odinger scholarship, FWF project I1751-N26 and FWF START project Y-901. The first and the second author are supported by the FWF project F5507-N26, which is part of the Special Research Program \emph{Quasi-Monte Carlo Methods: Theory and Applications}}
\begin{document}

\dedicatory{Dedicated to the memory of Professor Steven A. Gaal (1924-2016)}

\begin{abstract}
For a sequence of integers $\{a(x)\}_{x \geq 1}$ we show that the distribution of the pair correlations of the fractional parts of $\{ \langle \alpha a(x) \rangle \}_{x \geq 1}$ is asymptotically Poissonian for almost all $\alpha$ if the additive energy of truncations of the sequence has a power savings improvement over the trivial estimate. Furthermore, we give an estimate for the Hausdorff dimension of the exceptional set as a function of the density of the sequence and the power savings in the energy estimate. A consequence of these results is that the Hausdorff dimension of the set of $\alpha$ such that $\{\langle \alpha x^d \rangle\}$ fails to have Poissonian pair correlation is at most $\frac{d+2}{d+3} < 1$. This strengthens a result of Rudnick and Sarnak which states that the exceptional set has zero Lebesgue measure. On the other hand, classical examples imply that the exceptional set has Hausdorff dimension at least $\frac{2}{d+1}$.\\

An appendix by Jean Bourgain was added after the first version of this paper was written. In this appendix two problems raised in the paper are solved.
\end{abstract}

\date{}
\maketitle

\section{Introduction} \label{sect_1}

We will be interested in the \emph{pair correlation statistics} of sequences of real numbers from the unit interval, which is defined as follows. Let $\theta_1, \dots, \theta_N \in [0,1]$, and let $\| \cdot \|$ denote the distance to the nearest integer. For every interval $[-s,s]$ we set
$$
R_2 \big( [-s,s],\{ \theta_n\}, N \big) = \frac{1}{N} ~\# \left\{ 1 \leq j \neq k \leq N: \left\|\theta_j - \theta_k \right\| \leq \frac{s}{N} \right\}.
$$
The subscript ``2'' of ``$R_2$'' refers to the fact that these are the \emph{pair} correlations, that is, the correlations of order 2, in contrast to triple correlations or correlations of even higher order. For a sequence of independent, $[0,1]$-uniformly distributed random variables $\theta_1, \theta_2, \dots$ for every $s \geq 0$ we have
$$
R_2 ( [-s,s],\{ \theta_n \}, N) \to 2s
$$
almost surely. If this asymptotic relation holds for a given sequence we say that the distribution of the pair correlations is asymptotically \emph{Poissonian}.\\

Of particular interest are the distributions of the pair correlations of sequences of the form $\{ \langle \alpha x^d \rangle \}_{x \geq 1}$,  where $\langle \cdot \rangle$ denotes the fractional part function. These occur as the distribution of the spacings of the energy levels of certain integrable systems. See the introduction of~\cite{rst} for a discussion of this connection. For $d \geq 2$, Rudnick and Sarnak~\cite{rst} proved that the distribution of the pair correlations is asymptotically Poissonian for almost all $\alpha$. The case $d=2$, which corresponds to the energy levels of the \emph{boxed oscillator}, has received particular attention, see for example~\cite{hp,mse,rsz,td}. Boca and Zaharescu~\cite{bzp} extended this result to show that the pair correlations of $\{\langle \alpha a(x) \rangle\}_{x \geq 1}$ are almost always Poissonian for any $a(x) \in \mathbb{Z}[x]$ of degree at least 2. In the case $d=1$ the situation is very different, and the distribution of the pair correlations is \emph{not} asymptotically Poissonian for any value of $\alpha$; this fact is related to the \emph{Three Gap Theorem} of S\'os~\cite{sos} and {\'S}wierczkowski~\cite{sw}.\\

Another case which has been intensively investigated is that of  $\left\{\langle \alpha a(x) \rangle \right\}_{x \geq 1}$ for $\{a(x)\}_{x \geq 1}$ being a \emph{lacunary} sequence, see for example~\cite{bpt,clz,rzt}. For brevity we will say a sequence $\{a(x)\}_{x\geq 1}$ has the \emph{metric pair correlation property} if $\left\{\langle \alpha a(x) \rangle \right\}_{x \geq 1}$ has asymptotically Poissonian pair correlations for almost all $\alpha$. \\

Metric results of this type are generally much easier to obtain than results for the corresponding problem for \emph{specific} values of $\alpha$. A similar phenomenon occurs in the theory of uniform distribution modulo one, where equidistribution results for $\{\langle \alpha a(x) \rangle\}_{x \geq 1}$ are relatively easy to obtain in the metric sense but can be extremely difficult for specific values of $\alpha$. Rudnick and Sarnak~\cite{rst} and Heath-Brown~\cite{hp} have conjectured that $\{ \langle \alpha x^d \rangle \}_{x \geq 1}$ has Poissonian pair correlations if $\alpha$ is a Diophantine number. Recall that a real number is said to be of type $\kappa$ if $\left| \alpha - \frac{p}{q}\right | \gg \frac{1}{q^{\kappa}}$ for all $p,q \in \mathbb{Z}$, and a number is said to be Diophantine if it is of type $\kappa$ for all $\kappa > 2$. It follows from Roth's theorem that all irrational algebraic numbers are Diophantine. Some form of a Diophantine condition is certainly required as it was observed in~\cite{rst} that $\{ \langle \alpha x^d \rangle \}_{x \geq 1}$ fails to have the metric pair correlation property if $\alpha$ is not of type $\kappa = d+1$.\\

While the Heath-Brown--Rudnick--Sarnak conjecture seems far from resolution, several authors have obtained results that suggest that the Diophantine condition might be able to be somewhat relaxed. In particular, Heath-Brown~\cite{hp} has shown that if $\alpha$ is of type $9/4$ then
$$R_2 \big( [-s,s],\{  \langle \alpha x^2 \rangle\}, N) \rightarrow 2s + \mathcal{O}(s^{7/8}).$$
In another direction, Truelsen~\cite{td} has formulated a strong conjecture regarding averaged divisor sums which implies that $\{  \langle \alpha x^2 \rangle\}$ has Poissonian pair correlations for all $\alpha$ of type $\kappa < 3$. We will offer some additional evidence in this direction. It is a consequence of Theorem~\ref{th1} below that the set of exceptional $\alpha$ for which $\{ \langle \alpha x^d \rangle \}_{x \geq 1}$ fails to have Poissonian pair correlations has Hausdorff dimension at most $\frac{d+2}{d+3} < 1$.  This can be thought of as a quantitative strengthening of Rudnick and Sarnak's result. On the other hand recall that the Jarn\'ick-Besicovitch Theorem states that the Hausdorff dimension of the set of real numbers which fail to be of type $\kappa>2$ is $\frac{2}{\kappa}$. Thus the examples of Rudnick and Sarnak mentioned above imply that the Hausdorff dimension of the exceptional set is at least $\frac{2}{d+1}$. Our Hausdorff dimension estimate is an application of a general result which, as we will discuss below, uses very limited information about these particular sequences. It seems likely that additional number theoretic input could be incorporated into our argument to obtain improved Hausdorff dimension estimates for these sequences but we will not pursue this here. \\

Our main result will link the pair correlation of an integer sequence $\{a(x)\}_{x>1}$ to the additive energy of its truncations. Recall that the additive energy of a set of real numbers $A$ is defined to be \begin{equation}\label{defAE}
E(A) := \sum_{a+b=c+d} 1,
\end{equation}
where the sum is extended over all quadruples $(a,b,c,d) \in A^4$. Trivially one has that $|A|^2 \leq E(A) \leq |A|^3$. Additive energy has been extensively studied in the combinatorics literature. We refer the reader to~\cite{tv} for a discussion of its properties and applications.
 To ease notation in the sequel when a sequence $A:=\{a(x)\}_{x>1}$ is fixed we will abbreviate $R_2 ( [-s,s],\alpha, N)$ for $R_2 ( [-s,s],\{ \langle \alpha a(x) \rangle \}, N)$. Furthermore we will let $A_N$ denote the first $N$ elements of $A$. Our main result states that if the truncations $A_N$ of a sequence $A$ satisfy $E(A_N) \ll N^{3-\ve}$ for some $\varepsilon >0$, then $\{ \langle \alpha a(x) \rangle \}_{x \geq 1}$ has Poissonian pair correlations for almost all $\alpha$. Moreover, if the sequence satisfies the growth condition $a(x) \ll x^d$ then we obtain an estimate for the Hausdorff dimension of the exceptional set in terms of $d$ and $\varepsilon$. In particular we obtain the following theorem.
\begin{theorem} \label{th1}
Let $\{a(x)\}_{x \geq 1}$ be a sequence of distinct integers, and suppose that there exists a fixed constant $\ve >0$ such that
\begin{equation} \label{anv}
E(A_N) \ll N^{3-\ve} \qquad \textrm{as $N \to \infty$}.
\end{equation}
Then for almost all $\alpha$ one has
\begin{equation} \label{poiss}
R_2 \big([-s,s],\alpha,N\big) \to 2 s \qquad \textrm{as $N \to \infty$}
\end{equation}
for all $s \geq 0$.
Moreover if $a(x) \ll x^d$, then the Hausdorff dimension of the set of $\alpha$ for which~\eqref{poiss} fails is at most
\begin{equation}
\frac{d+3-\ve}{d+3}.
\end{equation}
\end{theorem}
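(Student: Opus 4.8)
The plan is to run the classical second--moment method for metric pair correlation problems, feeding the additive energy hypothesis~\eqref{anv} into the variance estimate and invoking the growth condition $a(x)\ll x^d$ only at the end, to control the geometry of the exceptional set. Throughout write $m_{jk}=a(j)-a(k)$ (nonzero, since the $a(x)$ are distinct) and $e(t)=e^{2\pi i t}$. First I would reduce to smooth weights: fix $s>0$, and for a parameter $M\geq 1$ let $\varphi_M^{\pm}$ be the Beurling--Selberg minorant and majorant of $\mathbf 1_{[-s,s]}$, so that $\varphi_M^-\leq\mathbf 1_{[-s,s]}\leq\varphi_M^+$, the Fourier transform $\widehat{\varphi_M^{\pm}}$ is supported in $[-M,M]$, and $\int_{\R}\varphi_M^{\pm}=2s\pm M^{-1}$. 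Letting $\Phi_M^{\pm}$ be the $1$--periodisation of $\varphi_M^{\pm}(N\,\cdot\,)$ and setting $\tilde R_2^{\pm}(\alpha,N)=\tfrac1N\sum_{1\leq j\neq k\leq N}\Phi_M^{\pm}(\alpha m_{jk})$, one has $\tilde R_2^{-}(\alpha,N)\leq R_2([-s,s],\alpha,N)\leq\tilde R_2^{+}(\alpha,N)$ pointwise, while $\int_0^1\tilde R_2^{\pm}(\alpha,N)\,d\alpha=(1-\tfrac1N)(2s\pm M^{-1})$. So it suffices to show $\tilde R_2^{\pm}(\cdot,N)$ concentrates about its mean for a suitable $M=M(N)$, and then to let $M\to\infty$ and intersect over a countable dense set of $s$, using monotonicity of $R_2$ in $s$.

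\emph{The variance estimate --- the crux.} Since $\widehat{\Phi_M^{\pm}}(\nu)=\tfrac1N\widehat{\varphi_M^{\pm}}(\nu/N)$ vanishes for $|\nu|>MN$ and is $O_s(1/N)$ otherwise, orthogonality of the characters over $\alpha\in[0,1]$ gives
\begin{equation*}
\int_0^1\Bigl|\tilde R_2^{\pm}(\alpha,N)-\textstyle\int_0^1\tilde R_2^{\pm}\Bigr|^2\,d\alpha\ \ll_s\ \frac{1}{N^4}\,T_N ,
\end{equation*}
where $T_N$ counts the sextuples $(j,k,j',k',\nu,\nu')$ with $1\leq j\neq k\leq N$, $1\leq j'\neq k'\leq N$, $0<|\nu|,|\nu'|\leq MN$ and $\nu\,m_{jk}=\nu'\,m_{j'k'}$. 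In the diagonal range $\nu=\pm\nu'$ the relation forces $m_{jk}=\pm m_{j'k'}$, an additive energy relation on $A_N$, and that part contributes $\ll MN\cdot E(A_N)\ll MN^{4-\ve}$ by~\eqref{anv}. For the remaining range, write $v=\nu m_{jk}$, group the sextuples by $v$, and note $|v|\leq MN\cdot\max_{x\leq N}|a(x)|$; Cauchy--Schwarz over the divisors of $v$ together with the divisor bound $\tau(v)\ll_\delta v^{\delta}$ bounds this contribution by $\ll (MN)^{o(1)}\,MN\,E(A_N)\ll MN^{4-\ve+o(1)}$. Hence $T_N\ll MN^{4-\ve+o(1)}$ and the variance above is $\ll_s MN^{-\ve+o(1)}$.

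\emph{From the variance to the conclusions.} Choosing $M=M(N)\to\infty$ slowly (e.g.\ $M=\log N$) and a subsequence $N_\ell=\lfloor\ell^A\rfloor$ with $A$ large makes $\sum_\ell(\text{variance at }N_\ell)<\infty$; Chebyshev and Borel--Cantelli applied to the deviation--from--mean events give $\tilde R_2^{\pm}(\alpha,N_\ell)\to 2s$ for almost every $\alpha$, and the elementary monotonicity bound $R_2([-s,s],\alpha,N)\leq\tfrac{N_{\ell+1}}{N_\ell}R_2([-s',s'],\alpha,N_{\ell+1})$ for $N_\ell\leq N\leq N_{\ell+1}$ (with $s'=sN_{\ell+1}/N_\ell$, together with its lower analogue) promotes this to~\eqref{poiss} for almost all $\alpha$ and all $s\geq 0$. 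For the Hausdorff bound, fix rational $s>0$ and $\eta>0$; after absorbing the monotonicity perturbation, the set where~\eqref{poiss} fails is contained in a countable union of $\limsup$ sets $\limsup_\ell B_{N_\ell}$, where $B_N=\{\alpha\in[0,1]:R_2([-s,s],\alpha,N)>2s+\eta\ \text{or}\ <2s-\eta\}$, and the sandwich plus Chebyshev give $|B_N|\ll_{s,\eta}N^{-\ve+o(1)}$. Now $a(x)\ll x^d$ enters: since $|m_{jk}|\ll N^d$ for $j,k\leq N$, the step function $\alpha\mapsto R_2([-s,s],\alpha,N)$ has at most $\ll\sum_{j\neq k}(|m_{jk}|+1)\ll N^{d+2}$ discontinuities on $[0,1]$, so $B_N$ is a union of at most $N^{d+2}$ intervals of total length $\ll_{s,\eta}N^{-\ve+o(1)}$. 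Covering $B_N$ by these intervals and using H\"older's inequality to trade their cardinality against their total length bounds the $\beta$--dimensional Hausdorff content of $B_N$ by $\ll_{s,\eta}(N^{d+2})^{1-\beta}(N^{-\ve+o(1)})^{\beta}=N^{(d+2)(1-\beta)-\ve\beta+o(1)}$; summing over $\ell$ (with $A$ large) is finite whenever $(d+2)(1-\beta)<\ve\beta$, in particular whenever $\beta>\tfrac{d+3-\ve}{d+3}$, so $H^{\beta}(\limsup_\ell B_{N_\ell})=0$ for such $\beta$. Taking the countable union over $s$ and $\eta$ bounds the Hausdorff dimension of the exceptional set by $\tfrac{d+3-\ve}{d+3}$.

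\emph{Main obstacle.} The decisive step is the variance estimate, and within it the off--diagonal terms ($\nu\neq\pm\nu'$) of $T_N$: the diagonal is handed to us directly by the additive energy, but the relation $\nu\,m_{jk}=\nu'\,m_{j'k'}$ for general $\nu,\nu'$ is multiplicative and must be converted back into additive information, which is exactly where the (harmless) $N^{o(1)}$ divisor losses are incurred --- and, one level deeper, where additional arithmetic structure of a specific sequence would be needed to sharpen the dimension exponent. A secondary technical point is the bookkeeping in the last paragraph: matching the measure bound $|B_N|\ll N^{-\ve+o(1)}$ against the $\ll N^{d+2}$ discontinuities forced by $a(x)\ll x^d$, and checking that the resulting content estimate, summed along the subsequence, indeed yields an exponent at most $\tfrac{d+3-\ve}{d+3}$.
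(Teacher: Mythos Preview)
Your variance estimate has a genuine gap in the general case. When you bound the off--diagonal part of $T_N$ you write $|v|\leq MN\cdot\max_{x\leq N}|a(x)|$ and then invoke $\tau(v)\ll_\delta v^{\delta}$ to claim a loss of only $(MN)^{o(1)}$. This implication is only valid when $\max_{x\leq N}|a(x)|$ is polynomially bounded in $N$. Part~1 of the theorem carries no growth hypothesis on $a(x)$; for a lacunary sequence (one of the paper's intended applications) one has $\max_{x\leq N}|a(x)|\gg c^{\,N}$, so $v^{\delta}$ is exponential in $N$ and your Cauchy--Schwarz step loses everything. The paper circumvents this by \emph{not} grouping by $v$ and using divisor bounds, but instead by parametrising the solutions of $n_1 v = n_2 w$ as $n_1=hw/\gcd(v,w)$, $n_2=hv/\gcd(v,w)$ and summing the Fourier coefficients directly to obtain
\[
\sum_{\substack{n_1,n_2\neq 0\\ n_1 v=n_2 w}} |c_{n_1}c_{n_2}| \ \ll\ \frac{s\log N}{N}\,\frac{\gcd(v,w)}{\sqrt{|vw|}},
\]
and then applying the Bondarenko--Seip bound for the GCD quadratic form $\sum_{v,w} R_N(v)R_N(w)\,\gcd(v,w)/\sqrt{|vw|}$. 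The crucial feature of that bound is that it depends only on the \emph{number} of distinct differences $v$ (at most $N^2$), not on their size; this is exactly what makes Part~1 go through for arbitrary integer sequences. Under the polynomial growth assumption of Part~2 your divisor--bound argument is fine and yields the same variance $\ll N^{-\ve+o(1)}$.

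Your treatment of the Hausdorff bound, by contrast, is genuinely different from the paper's and is more elementary. The paper proceeds by contradiction via a Frostman measure $\mu$ supported on the exceptional set, decomposes the Fourier series into dyadic frequency blocks $g_m$, transfers the Lebesgue $L^2$ bound for each $g_m$ to an $L^2(\mu)$ bound using the derivative estimate $\|g_m'\|_\infty\ll 2^m N^{d+5}$, and sums over $m$. You instead observe directly that $\alpha\mapsto R_2([-s,s],\alpha,N)$ is a step function with $\ll\sum_{j\neq k}|m_{jk}|\ll N^{d+2}$ jumps, so the bad set $B_N$ is a union of at most $N^{d+2}$ intervals of total length $\ll N^{-\ve+o(1)}$, and H\"older gives $\sum_i|I_i|^\beta\leq K^{1-\beta}(\sum_i|I_i|)^\beta$. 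This is cleaner; in fact your inequality $(d+2)(1-\beta)<\ve\beta$ gives the threshold $\beta>(d+2)/(d+2+\ve)$, which for $\ve<1$ is \emph{strictly smaller} than the paper's $(d+3-\ve)/(d+3)$, so your route even sharpens the stated bound once the variance input is in place.
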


To the best of our knowledge, the first part of Theorem~\ref{th1} covers all sequences in the literature for which such pair correlations results have been obtained and significantly relaxes the following criteria obtained by Rudnick and Zaharescu~\cite{rza}:
\begin{quote}
\emph{Let $\{a(x)\}_{x \geq 1}$ be a sequence of distinct integers and suppose that there are at most $\mathcal{O} \left( M N^{2+\ve} \right)$ solutions to the equation}
$$
n_1 \big(a(x_1) - a(y_1) \big) = n_2 \big( a(x_2) - a(y_2) \big)
$$
\emph{with $1 \leq x_i \neq y_i \leq N$, and $1 \leq |n_i| \leq M$, $M \ll N^R$ for some $R > 0$, and all $\ve > 0$. Then for almost all $\alpha$ we have~\eqref{poiss}.}
\end{quote}
Even if one could chose $M=1$ in the above, this condition requires that $E(A_N) \ll N^{2 + \ve}$ for all $\ve$ as compared with the condition $E(A_N) \ll N^{3-\ve}$ for some $\ve >0$ in Theorem~\ref{th1}. Furthermore, the presence of the coefficients $n_1,n_2$ in this condition makes it difficult to verify for a specific sequence $\{a(x)\}_{x \geq 1}$. On the other hand additive energy estimates are known for a wide class of sequences. Applications of Theorem~\ref{th1} are given in Section 2 below.\\

The second part of Theorem~\ref{th1} should be compared to the corresponding results for the Hausdorff dimension of the exceptional set in equidistribution theory. Recall that a classical result of Weyl~\cite{weyl} states that every sequence $\{\langle \alpha a(x) \rangle\}_{x \geq 1}$ is equidistributed for almost all $\alpha$ for any sequence $\{a(x)\}_{x \geq 1}$ of distinct integers. Erd\H os and Taylor~\cite{eto} proved the finer result that for an integer sequence $\{a(x)\}_{x \geq 1}$ satisfying $a(x)=\mathcal{O}(x^d)$ the set of those $\alpha$ for which the fractional parts of $\{\alpha a(x)\}_{x \geq 1}$ are not asymptotically equidistributed has Hausdorff dimension at most $(d-1)/d$; this result is known to be optimal (see~\cite[Theorem 6]{ruzsa}). We note that Nair~\cite{nair} already obtained some results on the Hausdorff dimension of exceptional sets in pair correlations problems.  However his interest is in questions regarding the speed of convergence in~\eqref{poiss} and consequently his results are in a somewhat different direction than ours.\\

Our work and its analogy to the equidistribution setting raises two questions:\\

\textbf{Problem 1:} Is it possible for an increasing sequence of distinct integers $\{a(x)\}_{x \geq 1}$ which satisfies $E(A_N) = \Omega \left( N^{3} \right)$ to have Poissonian pair correlations for almost all $\alpha$?\footnote{Here the notation $E(A_N) = \Omega \left( N^{3} \right)$ means that $E(A_N) \geq c N^3$ for some positive constant $c$ and infinitely many $N$; in other words, we do \emph{not} have $E(A_N)=o(N^3)$.}\\

\textbf{Problem 2:} If $\{a(x)\}_{x \geq 1}$ is an increasing sequence of distinct integers, does $E(A_N) = o \left( N^{3} \right)$ imply that~\eqref{poiss} holds for almost all $\alpha$ for all $s \geq 0$? \\

Shortly after the first version of this paper was finished and was made available online, we were contacted by Jean Bourgain who could prove that the answer to both questions above is negative. We are very grateful to him for giving us permission to include his arguments as an appendix to this paper.

\section{Applications} \label{examples}
In this section we discuss applications of Theorem~\ref{th1}.\\

We start by observing that if a sequence satisfies the lacunary growth condition $a(x+1)/a(x) \geq c > 1, ~x \geq 1,$ then it is easy to see that $E(A_{N}) \ll N^{2}$. Thus Theorem~\ref{th1} immediately implies that $\{a(x)\}$ has the metric pair correlation property, recovering a result of~\cite{rza}.\\

Next we will discuss some number theoretic sequences which will require the following facts. Let $d(n)$ denote the number of divisors of $n$. Then for some universal constant $c_0>0$ and every $\varepsilon >0$ we have the so-called divisor bound
\begin{equation}\label{divisorbound}
d(n) \ll e^{c_0 \log(n)/\log \log (n)} \ll_{\varepsilon} n^{\varepsilon}.
\end{equation}
If $A \subset \mathbb{Z}$ is a finite set of integers and $r(n,A) = |\{a,b \in A : n=a-b\}|$ then, using~\eqref{defAE}, we have that
\begin{equation}\label{energyToR}
E(A) \leq |A|^{2} \max_{n \in \mathbb{Z}} r(n,A).
\end{equation}
In order to recover the fact that any polynomial sequences, say $a(x) \in \mathbb{Z}[x]$, of degree $d \geq 2$ satisfies the conclusion of Theorem~\ref{th1}, it suffices to show that $r(n, A)=|\{x, y \in A : a(x)-a(y) =n\}|$ satisfies $r(n,A) \ll_{\varepsilon} n^{\varepsilon}$ for all $\varepsilon >0$. To see this note that  $x^d-y^d = (x-y) (x^{d-1}+ x^{d-2}y + \ldots +xy^{d-2}+y^{d-1}) $. From this we see that $\left(P(x) - P(y)\right)= (x-y) Q(x,y)$ where $Q$ is a bivariate degree $d-1$ polynomial. It follows that the number of solutions to $n=P(x)-P(y)=(x-y)Q(x,y)$ is at most the number of divisor pairs $n=ab$ multiplied with the number of simultaneous solutions of $a=(x-y)$ and $b=Q(x,y)$. Substituting the first equation into the second and applying the factor theorem shows that the number of solutions is at most $d-1$, which proves the claim.  Using~\eqref{energyToR} and Theorem~\ref{th1}, this establishes that $\{a(x)\}_{x \geq 1}$ has the metric pair correlation property. It also follows from this and our main theorem that the Hausdorff dimension of the exceptional set is at most $\frac{d+2}{d+3}$ for a degree $d$ polynomial sequence. \\

The additive energy of various sequences has been studied extensively in the additive combinatorics literature and these results can be easily paired with our results to establish the metric pair correlation property for many new sequences.  For instance, Konyagin~\cite{Konyagin1} (see also~\cite{Garaev,Garaev2,GK}) has shown that a convex sequence $\{a(x)\}_{x \geq 1}$, by which we mean a sequence satisfying $a(x) - a(x-1) < a(x+1)-a(x)$ for all $x > 1$, will satisfy $E(A_N) \ll N^{5/2}$. This includes, for instance, Bochkarev's sequence $\{\lfloor e^{(\log x )^{\beta}} \rfloor\}$ for $\beta >1$. In the context of the Waring--Goldbach problem, Piatetski-Shapiro~\cite{piat} proved that the sequence $a(x) = \lfloor \beta x^{\alpha} \rfloor$ satisfies $E(A_N) \ll N^{4-\alpha}$ for $\beta>0$ and $1 < \alpha < 3/2$. Estimates for the additive energy of general sequences of the form $a(x) = \lfloor F(x) \rfloor$ for smooth $F$ are given in~\cite{GK}.\\

Furthermore, it follows from the proof that $E(A_N) \ll_{\varepsilon} N^{2+\varepsilon}$ for polynomial sequences $\{a(x)\}_{x \geq 1}$ that if $\{b(x)\}_{x \geq 1}$ is a subsequence of polynomial relative density then it has the metric pair correlation property as well. By this we mean a subsequence $\{b(n)\}$ that satisfies
$$\frac{ \left| \{ b(n) :  n \leq N  \} \right| }{\left| \{ a(n) :  n \leq N  \} \right|}  \gg N^{-1+\eta}$$ for some fixed $\eta>0$. This does not appear to follow from the previous methods used to analyze polynomial sequences. It seems likely that an arbitrary subsequence of a polynomial sequence has the metric pair correlation property, which would follow from the arguments above if one could show that $r(n,A)$ is uniformly bounded for a given polynomial sequences. This would follow\footnote{This was pointed out to the third author by Bobby Gizzard and Terry Tao.} from the Bombieri--Lang conjecture using the work of Caporaso, Harris, and Mazur~\cite{CHM} for polynomials of degree $5$ and higher. In another direction, using very different methods from additive combinatorics, Sanders~\cite{Sanders} has shown that for universal constants $c_1$ and $c_2$ one has
$$ E(A) \leq |A|^{3} e^{-c_1 \log^{c_2} |A|}  $$
for an arbitrary set $A$ of squares. If one could take $c_2 > 1/2$ then it would follow from our arguments that an arbitrary subsequence of the squares has the metric pair correlation property. Here the precise form of Bondarenko and Seip's GCD bound (\ref{gcdsum}) below would play an important role.

\section{Preliminary results} \label{prelim}

Similar to previous approaches~\cite{rst,rza} our method proceeds by estimating the expectation and variance of sums of the form $\sum_{x,y} f (\alpha a(x)-\alpha a(y))$. However, rather than using smooth test functions $f$ as in some of the previously cited papers we work directly with the indicator functions of the short intervals $[-s/N,s/N]$. Replacing these indicator functions by their respective Fourier series and using a combinatorial argument together with the orthogonality of the trigonometric system, we will reduce the problem of estimating the moments of such sums of dilated functions to a problem involving a certain GCD (greatest common divisors) sum. The role of GCD sums in metric number theory goes back at least to  Koksma~\cite{koks} (see also~\cite{koks2}), and they play a role in the context of the Duffin--Schaeffer conjecture in metric Diophantine approximation (see Dyer and Harman~\cite{dhs}) and in the theory of almost everywhere convergence of sums of dilated functions~\cite{abs,lrr}. We will need the following upper bound of Bondarenko and Seip~\cite{bsg} for such GCD sums. As usual $\exp(x):=e^x$.

\begin{lemma} \label{lemmagcd}
Let $m_1, \dots, m_M$ be distinct positive integers, and let $b_1, \dots, b_M$ be real numbers such that $b_1^2+ \dots + b_M^2 \leq 1$. Then there exists an absolute constant $\kappa$ such that
\begin{equation} \label{gcdsum}
\sum_{k,\ell=1}^M b_k b_\ell \frac{\gcd(m_k,m_\ell)}{\sqrt{m_k m_\ell}} \leq \exp \left(\frac{\kappa \sqrt{(\log M) \log \log \log M}}{\sqrt{\log \log M}}\right)
\end{equation}
(we assume that $M$ is so large that all the logarithmic terms are well-defined and positive).
\end{lemma}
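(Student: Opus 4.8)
The quantity to be estimated in~\eqref{gcdsum} is precisely the largest eigenvalue $\Gamma(\{m_1,\dots,m_M\})$ of the matrix $T=\big(\gcd(m_k,m_\ell)/\sqrt{m_k m_\ell}\big)_{k,\ell=1}^{M}$, so the plan is to bound $\lambda_{\max}(T)$. The starting point is the multiplicative structure of the entries: writing $m=\prod_p p^{a_p(m)}$ one has $\gcd(m,n)/\sqrt{mn}=\prod_p p^{-|a_p(m)-a_p(n)|/2}$, and since $(a,b)\mapsto\rho^{|a-b|}$ is a positive definite kernel on $\Z_{\ge 0}$ for each $0<\rho<1$, the matrix $T$ is a Gram matrix of unit vectors $w_m=\bigotimes_p e_p^{(a_p(m))}$ in a tensor product of Hilbert spaces, one factor for each prime; in particular $T\succeq 0$. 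If all the $m_k$ divide a fixed integer $\prod_{p\le y}p^{A}$, then $T$ is a principal submatrix of $\bigotimes_{p\le y}\big(p^{-|a-b|/2}\big)_{a,b=0}^{A}$, whose factors are Kac--Murdock--Szeg\H o matrices with spectral radius at most $(1+p^{-1/2})/(1-p^{-1/2})$ regardless of $A$. Hence, by Cauchy interlacing and the prime number theorem,
\[
\Gamma(\{m_1,\dots,m_M\})\le \prod_{p\le y}\frac{1+p^{-1/2}}{1-p^{-1/2}}= \exp\!\left(O\!\left(\frac{\sqrt y}{\log y}\right)\right)
\]
whenever all prime factors of all the $m_k$ are at most $y$. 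So the only obstruction is the presence of large prime factors.

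The core of the argument is a reduction to small $y$. One fixes a cutoff $y_0$ of size roughly $(\log M)(\log\log M)(\log\log\log M)$, chosen so that $\exp(O(\sqrt{y_0}/\log y_0))$ is of the order of the right-hand side of~\eqref{gcdsum} while $\pi(y_0)$ still comfortably exceeds $\log_2 M$ (so that $M$ integers with all prime factors below $y_0$ can exist), and factors each $m_k$ into its $y_0$-smooth part $s_k$ and its $y_0$-rough part $r_k$. Then $T=S\circ R$ is the Hadamard product of the positive semidefinite GCD matrices of the $s_k$ and of the $r_k$, and the Schur product inequality $\lambda_{\max}(S\circ R)\le(\max_k S_{kk})\lambda_{\max}(R)=\lambda_{\max}(S)$ removes the rough primes. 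What is left, however, is not quite the displayed estimate: one must bound the largest eigenvalue of the GCD matrix of the \emph{multiset} $\{s_1,\dots,s_M\}$, which can exceed that of the set of distinct smooth values by the maximal multiplicity with which a smooth value repeats. Controlling that multiplicity is where the rough parts must be brought back in — a smooth value of high multiplicity comes with many distinct rough companions, each a product of primes exceeding $y_0$ and hence forcing the associated $m_k$ to be large — and it requires a delicate iterated counting of how the large prime factors are distributed among the $m_k$. Once this is done, the submatrix estimate of the first paragraph applies to the distinct smooth values, and optimizing $y_0$ against the loss incurred in the reduction produces~\eqref{gcdsum}.

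I expect this reduction to be the genuine obstacle. The lemma places no upper bound on the sizes of the $m_k$, so their factorizations can involve arbitrarily large primes and one cannot simply pass to divisors of a fixed primorial; disentangling the large-prime contribution while losing no more than the stated factor is exactly the delicate combinatorial and analytic work carried out by Bondarenko and Seip in~\cite{bsg}. That this loss cannot be avoided — that the factor $\sqrt{\log\log\log M}$ in the exponent is intrinsic rather than an artifact of the method — is confirmed by their matching lower bound, obtained by taking $\{m_1,\dots,m_M\}$ to be the set of all divisors of a primorial formed from primes in a suitably chosen range.
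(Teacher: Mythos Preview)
The paper does not prove this lemma at all; it is quoted from Bondarenko--Seip~\cite{bsg} (Theorem~1 for the unweighted sum and Corollary~1 for the eigenvalue formulation), with only a remark that the weighted and unweighted versions differ by a harmless $\log M$ absorbed into $\kappa$. Your write-up goes further and tries to outline what is inside~\cite{bsg}.

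The ingredients you name are the right ones --- the tensor/multiplicative structure of the kernel $\gcd(m,n)/\sqrt{mn}=\prod_p p^{-|a_p(m)-a_p(n)|/2}$, the Kac--Murdock--Szeg\H{o} bound yielding $\prod_{p\le y}\frac{1+p^{-1/2}}{1-p^{-1/2}}=\exp\bigl(O(\sqrt{y}/\log y)\bigr)$ for $y$-smooth supports, and a smooth/rough split at $y_0\asymp(\log M)(\log\log M)(\log\log\log M)$ --- and these are indeed what drive~\cite{bsg}. But the specific reduction you sketch does not hold together. First, the Schur step is garbled: you want $\lambda_{\max}(S\circ R)\le(\max_k R_{kk})\,\lambda_{\max}(S)=\lambda_{\max}(S)$, not what is written. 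More importantly, this bound is vacuous on its own. Take the $m_k$ to be $M$ distinct primes larger than $y_0$; then every smooth part equals $1$, the multiplicity is $M$, and $\lambda_{\max}(S)=M$. Your heuristic for bounding the multiplicity --- that many distinct rough companions ``force the associated $m_k$ to be large'' --- cannot rescue this, because (as you yourself observe in the final paragraph) the lemma imposes no size restriction on the $m_k$. In~\cite{bsg} the large primes are not removed in one Schur stroke; they are peeled off through an iterated decomposition that tracks, at each stage, how many of the $m_k$ share a prescribed pattern of large prime factors, and it is this counting (not any size constraint) that limits the damage.

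Since both the paper and you ultimately defer the substance to~\cite{bsg}, your version is acceptable as a pointer to the literature with more context than the paper supplies; just do not present the middle paragraph as a self-contained argument.
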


Lemma~\ref{lemmagcd} is stated in a formulation without coefficients $b_1, \dots, b_M$ in Theorem 1 of~\cite{bsg}, and in a somewhat concealed form (formulated in terms of the largest eigenvalues of general GCD matrices) in Corollary 1 of~\cite{bsg}.\footnote{For the connection between GCD sums and eigenvalues of GCD matrices, see~\cite{absw}.} Coefficients can be added at the cost of an additional factor $\log N$ on the right-hand side; however, this additional factor is omitted in the statement of Lemma~\ref{lemmagcd} since it can be incorporated into the exponential term by taking a slightly larger value for $\kappa$. One should note that the quadratic form defined by the left-hand side of~\eqref{gcdsum} is positive definite; this fact can be established using methods from linear algebra (see~\cite[Example 3]{bl}) or using an interpretation of the left-hand side of~\eqref{gcdsum} as an inner product in an appropriate function space (see \cite[Lemma 1]{abs} and \cite{ls}).\\

The upper bound in Lemma~\ref{lemmagcd} is optimal (except for the value of the constant), as was recently shown by Bondarenko and Seip in~\cite{bsl}. However, for our proof of Theorem~\ref{th1} we do not actually need the full power of the result from~\cite{bsg}; the earlier estimates from~\cite{dhs} or~\cite{abs} would suffice as well. As a side note, the sum in~\eqref{gcdsum} is a GCD sum with parameter 1/2, while more generally these GCD sums contain the expression $(\gcd(m_k,m_\ell))^{2 \beta}/(m_k m_\ell)^\beta$ for some $\beta$, the most interesting cases being $\beta \in [1/2,1]$. Recent research has revealed an interesting connection between such GCD sums and the Riemann zeta function; see for example~\cite{al},~\cite{bsl} and~\cite{lrr}.\\

For the proof of the second part of Theorem~\ref{th1} we will also need the following properties of the Hausdorff dimension (\cite[Lemma 10 and Lemma 11]{nair}).\\

\begin{lemma} \label{lemmah}~
\begin{enumerate}[a)]
\item If $E_1 \subset \mathbb{R}$ is a set of Hausdorff dimension $\nu$, then there exists a compact set $E_2 \subset E_1$ such that $E_2$ also has Hausdorff dimension $\nu$.
\item Let $E \subset \mathbb{R}$ be a compact set whose Hausdorff dimension is greater than $\nu$. Then there exists a positive Borel measure $\mu$ supported on $E$ such that for every interval $[x,y]$ we have $\mu([x,y]) \leq (y-x)^\nu$.
\end{enumerate}
\end{lemma}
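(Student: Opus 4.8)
Both assertions are classical facts from geometric measure theory, and I would prove them separately. Part (b) is the ``mass distribution'' direction of Frostman's lemma, and part (a) is the extraction of a compact subset of (essentially) full Hausdorff dimension, which rests on the measurable subset theorem for analytic sets. The genuinely constructive work lies in (b), so I would begin there. Throughout I write $\mathcal{H}^\nu$ for the $\nu$-dimensional Hausdorff measure and $\mathcal{H}^\nu_\infty$ for the corresponding content (covers by intervals of arbitrary length, no diameter restriction), recalling that $\mathcal{H}^\nu_\infty(A)=0$ if and only if $\mathcal{H}^\nu(A)=0$.

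For (b), the first step is to translate the dimension hypothesis into a statement about content: since $\dim_H E > \nu$ we have $\mathcal{H}^\nu(E)=\infty$, and in particular $\mathcal{H}^\nu_\infty(E)>0$. I would then build the measure by a top-down dyadic construction. At a fine generation $m$, place mass $|Q|^\nu=2^{-m\nu}$ on each dyadic interval $Q$ of generation $m$ that meets $E$, and then sweep upward through the generations, rescaling the mass inside any dyadic interval $Q$ whose accumulated mass exceeds $|Q|^\nu$ so that equality holds there. The resulting measure $\mu_m$ satisfies $\mu_m(Q)\le |Q|^\nu$ for every dyadic $Q$ of generation at most $m$ by construction. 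The crux is the lower bound $\mu_m(\R)\gg \mathcal{H}^\nu_\infty(E)$: the normalization only ever shrinks mass across genuine dyadic covers of $E$, and every such cover has $\sum_Q |Q|^\nu \ge \mathcal{H}^\nu_\infty(E)$ by definition of the content. This is the step where a max-flow/min-cut duality on the dyadic tree keeps the bookkeeping clean. Since the $\mu_m$ have uniformly bounded mass and are supported in a fixed bounded neighbourhood of the compact set $E$, weak-$*$ compactness yields a subsequential limit $\mu$. Because $E$ is compact the supports shrink to $E$, so $\mu$ is supported on $E$; the estimate $\mu(Q)\le |Q|^\nu$ survives the limit for closed dyadic intervals; and any interval $[x,y]$ meets a bounded number of dyadic intervals of comparable length, giving $\mu([x,y])\le C(y-x)^\nu$ for an absolute $C$. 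Replacing $\mu$ by $\mu/C$ then yields the clean constant $1$ while keeping $\mu$ positive, as required.

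For (a), the engine is the measurable subset theorem (Besicovitch--Davies): every analytic set $A$ with $\mathcal{H}^s(A)>0$ contains a compact $K$ with $0<\mathcal{H}^s(K)<\infty$, hence with $\dim_H K=s$ exactly. Applying this to $E_1$ for each $s<\nu$ (where $\mathcal{H}^s(E_1)=\infty>0$) produces, for every $s<\nu$, a compact $K_s\subseteq E_1$ of dimension exactly $s$; equivalently, $\sup\{\dim_H K : K\subseteq E_1 \text{ compact}\}=\nu$. (Choquet capacitability of $\mathcal{H}^s_\infty$ on analytic sets gives the same conclusion.)

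The main obstacle is then the assembly of these pieces into a \emph{single} compact subset of dimension exactly $\nu$. Since the $K_s$ may live in disjoint parts of $E_1$ that accumulate at a point omitted by $E_1$, their union need not lie in any compact subset of $E_1$; indeed, for a pathological $E_1$, such as a countable union of Cantor sets of increasing dimensions accumulating at an omitted point, no compact subset attains the full dimension, so this attainment is the delicate point and genuinely requires extra structure (for instance that $\dim_H E_1$ already be realized on a bounded component carrying positive $\mathcal{H}^\nu$-mass). Fortunately, the lemma is only ever used in the contrapositive form ``dimension strictly above a threshold $s_0$'', for which the existence of compact subsets of dimension arbitrarily close to $\nu$ — exactly what the measurable subset theorem delivers — is all that is needed: to show $\dim_H E\le s_0$ one assumes $\dim_H E > s_0$, picks $s$ with $s_0<s<\dim_H E$, extracts a compact $K\subseteq E$ with $\dim_H K=s>s_0$, and feeds $K$ into part (b). Accordingly, in the plan I would prove this usable form and record the attainment subtlety alongside it, so that the second part of Theorem~\ref{th1} proceeds without needing literal equality of dimensions on a single compact set.
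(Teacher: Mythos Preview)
The paper does not prove this lemma at all: it is stated with a citation to \cite[Lemma 10 and Lemma 11]{nair} and used as a black box. So there is no ``paper's own proof'' to compare against, and your proposal goes well beyond what the paper does.

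Your treatment of (b) is the standard Frostman construction and is correct. Your treatment of (a) is also sound, and in fact you catch a genuine issue: as literally stated, part (a) is false. Your counterexample sketch is exactly right --- take $E_1=\bigcup_n C_n$ with $C_n\subset[1/(n+1),1/n]$ a Cantor set of dimension $1-1/n$; then $\dim_H E_1=1$, $E_1$ is $F_\sigma$, yet every compact $K\subset E_1$ is bounded away from $0$ and hence contained in finitely many $C_n$, giving $\dim_H K<1$. What is true, and what you correctly isolate, is that for any analytic $E_1$ and any $s<\dim_H E_1$ one can find a compact $K\subset E_1$ with $\dim_H K\ge s$ (Besicovitch--Davies / Choquet capacitability). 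You are also right that this weaker statement is precisely what Section~\ref{sec:proof2} actually uses: the argument there assumes $\dim_H E>\frac{d+3-\varepsilon}{d+3}$, picks $\nu$ strictly between this threshold and $\dim_H E$, and needs only a compact subset of dimension exceeding $\nu$ to feed into (b). So your plan --- prove the approximate form, flag the attainment subtlety, and observe that the application is unaffected --- is the appropriate fix.
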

%

%
%
%

%

\section{The variance estimate} \label{sec:proof}

Throughout this section, the constants implied by the symbol ``$\ll$'' are independent of $N$ and $A = \{a(x)\}_{x \geq 1}$. Furthermore, if we assume that the value of $s$ is uniformly bounded away from zero and infinity, then the constants implied by ``$\ll$'' are also independent of $s$.\\

Let $s$ and $N$ be given, and assume that $2s \leq N$. For $\alpha \in \mathbb{R}$ we set
$$
\mathbf{I}_{s,N} (\alpha) := \left\{ \begin{array}{ll}  1 & \textrm{if $\|\alpha\| \leq \frac{s}{N}$} \\ 0 & \textrm{otherwise.} \end{array} \right.
$$
In other words, $\mathbf{I}_{s,N}$ is the indicator function of the interval $[-s/N,s/N]$, extended with period 1. With this notation we have that
\begin{equation} \label{r2rep}
R_2 \big([-s,s],\alpha,N\big) = \frac{1}{N} \sum_{\substack{1 \leq x,y \leq N,\\x \neq y}} \mathbf{I}_{s,N} \big(\alpha (a(x) -a(y))\big),
\end{equation}
and
\begin{equation} \label{r2int}
\int_0^1 R_2 \big([-s,s],\alpha,N\big) ~d\alpha = \frac{2(N-1)s}{N}.
\end{equation}
The main technical lemma is the following which may be seen as a pair correlation version of the Erd\H{o}s--Tur\'an inequality:
\begin{lemma}\label{lem:L2est}
We have
$$\int_0^1 \left( R_2 \big([-s,s],\alpha,N\big)  -  \frac{2(N-1)s}{N} \right)^2 ~d \alpha$$
$$\ll E(A_N) N^{-3} \exp \left(\frac{\hat{\kappa} \sqrt{\log N \log \log \log N}}{\sqrt{\log \log N}}\right),$$
where $\hat{\kappa}$ is an absolute constant. (We assume that $N$ is so large that all logarithms are well-defined and positive.)
\end{lemma}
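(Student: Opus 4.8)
The plan is to expand the square, use the Fourier expansion of $\mathbf{I}_{s,N}$, and reduce the second moment to a GCD sum to which Lemma~\ref{lemmagcd} applies. First I would write, using \eqref{r2rep} and \eqref{r2int},
$$
R_2\big([-s,s],\alpha,N\big) - \frac{2(N-1)s}{N} = \frac1N \sum_{\substack{1\le x,y\le N\\ x\ne y}} \Big( \mathbf{I}_{s,N}\big(\alpha(a(x)-a(y))\big) - \frac{2s}{N}\Big),
$$
and replace $\mathbf{I}_{s,N}$ by its Fourier series $\mathbf{I}_{s,N}(t) = \frac{2s}{N} + \sum_{h\ne 0} c_h e(ht)$ with $c_h = \frac{1}{\pi h}\sin(2\pi h s/N)$, so that $|c_h| \ll \min(s/N, 1/|h|)$. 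Subtracting the constant term $2s/N$ kills the $h=0$ frequency, so the inner bracket equals $\sum_{h\ne 0} c_h\, e\big(h\alpha(a(x)-a(y))\big)$. Then I would square, integrate in $\alpha$ over $[0,1]$, and use orthogonality $\int_0^1 e\big((h(a(x)-a(y)) - h'(a(x')-a(y')))\alpha\big)\,d\alpha = 1$ iff the frequency vanishes, $0$ otherwise. This produces
$$
\int_0^1 \Big(\cdots\Big)^2 d\alpha = \frac{1}{N^2} \sum_{h,h'\ne 0} c_h \overline{c_{h'}} \, \#\Big\{ \substack{x\ne y,\ x'\ne y'} : h(a(x)-a(y)) = h'(a(x')-a(y')) \Big\}.
$$

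Next I would organize this counting by the common value $m := h(a(x)-a(y)) = h'(a(x')-a(y'))$. For each nonzero integer $m$, let $\rho(m)$ be the number of pairs $(x,y)$ with $x\ne y$, $1\le x,y\le N$, and $(a(x)-a(y)) \mid m$ with quotient $h = m/(a(x)-a(y))$; note $\sum_m$ (over the $m$ that occur) of $\rho(m)^2$ is essentially what we are summing, weighted by the Fourier coefficients. The key observation is that for a fixed difference value $\delta = a(x)-a(y)$, the number of representing pairs $(x,y)$ is $r(\delta, A_N)$ in the notation of \eqref{energyToR}, and by \eqref{defAE} one has $\sum_{\delta} r(\delta,A_N)^2 \le E(A_N)$ — actually more precisely $\sum_\delta r(\delta,A_N)^2$ is exactly the number of solutions of $a(x)-a(y) = a(x')-a(y')$, which is $\le E(A_N)$. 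I would then substitute $m_k = $ the distinct values of $|h\delta|$ arising, set $b_k$ proportional to $\sqrt{|c_{h_k}|}\,r(\delta_k,A_N)^{1/2}$ suitably normalized so that $\sum b_k^2 \le 1$ after pulling out the factor $E(A_N)(s/N)$ or similar, and recognize the resulting bilinear form in $\gcd(m_k,m_\ell)/\sqrt{m_k m_\ell}$. The weight bookkeeping is where the factor $E(A_N) N^{-3}$ should emerge: one factor $N^{-2}$ is explicit, $E(A_N)$ comes from $\sum r^2$, and the remaining $N^{-1}$ together with the GCD structure comes from $\sum_h |c_h|^2 \ll (s/N)^2 \cdot (N/s) = s^2/N$ combined with how the $h$'s interact with the $\gcd$.

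The main obstacle will be the correct bookkeeping in passing from the raw count to a clean positive-definite GCD sum of the exact shape in Lemma~\ref{lemmagcd}: one must decide which integer plays the role of $m_k$ (it should be $|h|\cdot|a(x)-a(y)|$ or a divisor thereof), handle the multiplicities when several pairs $(h,\delta)$ give the same $m_k$ so that the $m_k$ are genuinely distinct, verify that the off-diagonal terms really are controlled by $\gcd$'s and not something larger, and track the $s$-dependence so the final bound is uniform for $s$ in a compact range as asserted at the start of the section. A secondary subtlety is dealing with the tail of the Fourier series (large $|h|$): here the decay $|c_h| \ll 1/|h|$ plus the $\gcd$-sum bound (which is only mildly super-polynomial in $M$, hence dwarfed by any power of $N$) must be combined, possibly truncating the Fourier series at $|h| \le N^C$ and estimating the tail trivially. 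Once the bilinear form is in the form \eqref{gcdsum}, Lemma~\ref{lemmagcd} delivers the $\exp(\hat\kappa\sqrt{\log N\log\log\log N}/\sqrt{\log\log N})$ factor directly, completing the estimate.
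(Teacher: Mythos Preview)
Your high-level architecture (Fourier expansion, orthogonality, reduction to a GCD sum, appeal to Lemma~\ref{lemmagcd}) is exactly the paper's, but the central computational step is mis-identified and the proposed application of Lemma~\ref{lemmagcd} would not work as written. After orthogonality the integral equals
\[
\frac{1}{N^2}\sum_{v,w\ne 0} R_N(v)R_N(w)\sum_{\substack{n_1,n_2\ne 0\\ n_1 v=n_2 w}} c_{n_1}c_{n_2},
\]
where $R_N(v)=\#\{(x,y):a(x)-a(y)=v\}$. The correct objects to feed into Lemma~\ref{lemmagcd} are the \emph{difference values}: one takes the $m_k$ to be the distinct $|v|$, with coefficients $b_k=R_N(v)/\sqrt{E(A_N)}$ (so $\sum b_k^2\le 1$, since $\sum_v R_N(v)^2=E(A_N)$). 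Your suggestion of taking $m_k=|h\delta|$ with $b_k\propto \sqrt{|c_{h_k}|}\,r(\delta_k)^{1/2}$ breaks down because many pairs $(h,\delta)$ give the same product (so there is no well-defined $h_k,\delta_k$), and more fundamentally because organizing by $m=h\delta$ collapses the sum to a pure diagonal $\sum_m(\cdot)^2$ with no GCD structure at all---the GCD quadratic form can only emerge \emph{after} the inner $n$-sum is evaluated, not before.

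The step you are missing (and which does most of the work) is the explicit inequality
\[
\sum_{\substack{n_1,n_2\ne 0\\ n_1 v=n_2 w}} |c_{n_1}c_{n_2}| \;\ll\; (\log N)\,\frac{s}{N}\,\frac{\gcd(v,w)}{\sqrt{|vw|}}.
\]
This is proved by parametrizing solutions as $n_1=hw/\gcd(v,w)$, $n_2=hv/\gcd(v,w)$ and splitting the $h$-sum into three ranges according to which of the two bounds in $|c_n|\le\min(2s/N,1/|n|)$ is effective for $n_1$ and for $n_2$; the middle range produces the $\log N$, and each range contributes $\ll (s/N)\gcd(v,w)/\max(|v|,|w|)\le (s/N)\gcd(v,w)/\sqrt{|vw|}$. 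Once this estimate is in hand, no truncation of the Fourier series is needed (the tail is already absorbed), and Lemma~\ref{lemmagcd} applied with the $(m_k,b_k)$ above yields the claimed bound with the $s/N\cdot\log N$ factor absorbed into the $\exp(\cdots)$ by enlarging $\kappa$ to $\hat\kappa$. (Incidentally, your side computation $\sum_h|c_h|^2\ll s^2/N$ should read $\ll s/N$.)
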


\begin{proof}
We may expand $\mathbf{I}_{s,N} (\alpha)$ in a Fourier series as
$$
\mathbf{I}_{s,N} (\alpha) \sim \sum_{\substack{n \in \mathbb{Z} \\ n \neq 0 } } c_n e(n \alpha),
$$
with $e(\alpha)=e^{2 \pi i \alpha}$ and
$$c_n= \frac{\sin(2\pi n s N^{-1})}{\pi n}.$$
Combining this with the trivial estimate $|c_n| \leq \int_{-sN^{-1}}^{sN^{-1}} |e(-n \alpha)| ~d\alpha \leq \frac{2s}{N}$ gives
\begin{equation} \label{fours}
|c_n| \leq \min \left(\frac{2s}{N}, \frac{1}{|n|} \right).
\end{equation}
Using~\eqref{r2rep} and expanding $\mathbf{I}_{s,N}$ in its Fourier series gives us
\begin{eqnarray}
& & \int_0^1 \left( R_2 \big([-s,s],\alpha,N\big)  -  \frac{2(N-1)s}{N} \right)^2 ~d \alpha \label{line*}\\
& = & \frac{1}{N^2} \int_0^1 \left( \sum_{\substack{1 \leq x,y \leq N,\\x \neq y}} \quad \sum_{\substack{n \in \mathbb{Z} \\ n \neq 0 }} c_n e\left( n \alpha (a(x) - a(y)) \right) \right)^2 ~d \alpha. \label{line1}
\end{eqnarray}
Expanding the square and integrating, the expression in line~\eqref{line1} is equal to
\begin{eqnarray*} & & \frac{1}{N^2} ~\sum_{\substack{1 \leq x_1, x_2, y_1, y_2 \leq N,\\x_1 \neq y_1,~x_2 \neq y_2}} ~\sum_{n_1,n_2 \in \mathbb{Z} \backslash \{0\}} 
c_{n_1} c_{n_2} ~\times \\
& &  \times ~ \int_{0}^{1} e \Big( \alpha \big(n_1 \left( a(x_1)-a(y_1)\right) - n_2 \left( a(x_2)-a(y_2)\right) \big)  \Big) d\alpha.
\end{eqnarray*}
Defining $R_N(v)$, the number of representations of an integer $v$, by
$$
R_N(v) := \# \left\{ (x,y) \in \{1, \dots, N\}^2, ~x \neq y:~~a(x)-a(y) = v \right\},
$$
the quantity above is
\begin{eqnarray*}
& & \frac{1}{N^2} ~\sum_{v,w \in \mathbb{Z} \backslash \{0\}} ~\sum_{\substack{n_1,n_2 \in \mathbb{Z} \backslash \{0\}, \\ n_1v = n_2 w}} R_N(v) R_N(w) c_{n_1} c_{n_2} \\
& = & \frac{1}{N^2} ~\sum_{v,w \in \mathbb{Z} \backslash \{0\}} R_N(v) R_N(w) ~\sum_{\substack{n_1,n_2 \in \mathbb{Z} \backslash \{0\}, \\ n_1v = n_2 w}} c_{n_1} c_{n_2}.
\end{eqnarray*}
Lemma~\ref{lem:L2est} now follows from Lemma~\ref{lemmagcd} once we show that
\begin{eqnarray}\label{cntoGCD}\sum_{\substack{n_1,n_2 \in \mathbb{Z} \\ n_1,n_2 \neq 0, \\ n_1v = n_2 w}} |c_{n_1} c_{n_2}| \ll (\log N) \frac{s}{N} \frac{\gcd(v,w)}{\sqrt{|vw|}}, \qquad v,w \neq 0,
\end{eqnarray}
if we assume that $\hat{\kappa} > \kappa$, where $\kappa$ is the absolute constant from Lemma~\ref{lemmagcd}.\\

In the sequel we will assume that $v,w \neq 0$. Note that $n_1 v = n_2 w$ if and only if $n_1 = \frac{h w}{\gcd(v,w)}$ and $n_2 = \frac{h v}{\gcd(v,w)}$ for some integer $h$. Using this and~\eqref{fours} we can record the following estimates on the quantity $|c_{n_1} c_{n_2}|$. For values of $|h| \leq \frac{N \gcd(v,w)}{s \max(|v|,|w|)}$ the following inequality is efficient
\begin{equation}\label{eq1}
|c_{n_1} c_{n_2}| \leq \frac{4s^2}{N^2}.
\end{equation}
If $\frac{N \gcd(v,w)}{s \max(|v|,|w|)} \leq |h| \leq \frac{N \gcd(v,w)}{s \min(|v|,|w|)}$, then
\begin{equation}\label{eq2}
|c_{n_1} c_{n_2}| \leq \frac{2s}{N} \frac{\gcd(v,w)}{h \max(|v|,|w|)}.
\end{equation}
Finally, if $|h| \geq \frac{N \gcd(v,w)}{s \min(|v|,|w|)}$, then
\begin{equation}\label{eq3}
|c_{n_1} c_{n_2}| \leq \frac{\gcd(v,w)^2}{h^2 |v w|}.
\end{equation}

As a consequence, for fixed $v,w$ we have
\begin{eqnarray*}
& & \sum_{\substack{n_1,n_2 \in \mathbb{Z} \backslash \{0\}, \\ n_1v = n_2 w}} |c_{n_1} c_{n_2}|\\
& \leq & \frac{2 N \gcd(v,w)}{s \max(|v|,|w|)} \frac{4s^2}{N^2}  + ~2 \sum_{\frac{N \gcd(v,w)}{s \max(|v|,|w|)} \leq h \leq \frac{N \gcd(v,w)}{s \min(|v|,|w|)}} \frac{2s}{N} \frac{\gcd(v,w)}{h \max(|v|,|w|)} \\
& & + ~2 \sum_{h \geq \frac{N \gcd(v,w)}{s \min(|v|,|w|)}} \frac{\gcd(v,w)^2}{h^2 |v w|} \\
& \ll & (\log N) \frac{s}{N} \frac{\gcd(v,w)}{\max(|v|,|w|)} +  \frac{s \min(|v|,|w|)}{N \gcd(v,w)} \frac{\gcd(v,w)^2}{|v w|}
\end{eqnarray*}
\begin{equation}
 \ll (\log N) \frac{s}{N}\frac{ \gcd(v,w)}{ \sqrt{|vw|}}.
\end{equation}
This completes the proof of Lemma~\ref{lem:L2est}.
\end{proof}

\section{Proof of Theorem~\ref{th1}} \label{sec_proof_th1}
In order to obtain the desired asymptotic result for almost all $\alpha$ we can use standard methods, such as the one used in Sections 3.2 and 3.3 of~\cite{rza}. 

Let $\gamma$ be a real number sufficiently large such that $\gamma \varepsilon > 1$. For $M \geq 1$, let
$$
N_M = \left\lceil M^\gamma \right\rceil.
$$
Let $s > 0$ be fixed. Then a combination of the variance estimate from the previous section, Chebyshev's inequality and the first Borel-Cantelli lemma easily implies that for almost all $\alpha$ we have
$$
\lim_{M \to \infty} R_2 \big([-s,s],\alpha,N_M\big) \to 2s \qquad \textrm{as $M \to \infty$},
$$
where the exceptional set depends on $s$. Thus we have the desired convergence behavior along a subsequence of $\mathbb{N}$. Recall that, as noted at the beginning of the previous section, the error terms in the crucial variance estimate hold uniformly in $s$ if we assume that $s$ is bounded away from 0 and $\infty$. Thus we can also prove that
$$
R_2\left(\left[-s \frac{N_{M}}{N_{M+1}}, s \frac{N_{M}}{N_{M+1}} \right],\alpha,N_M \right) \to 2s
$$
and
$$
R_2 \left( \left[-s\frac{N_{M+1}}{N_M}, s\frac{N_{M+1}}{N_M} \right],\alpha,N_{M+1} \right) \to 2s
$$
as $M \to \infty$. For $N$ satisfying$N_M \leq N \leq N_{M+1}$ we have, by definition,
\begin{eqnarray*}
& & N_M R_2 \left(\left[-\frac{s N_{M}}{N_{M+1}}, \frac{s N_{M}}{N_{M+1}} \right],\alpha,N_M \right) \\
& \leq & N R_2 \big([-s,s],\alpha,N \big) \\
& \leq & N_{M+1} R_2 \left( \left[-\frac{s N_{M+1}}{N_M}, \frac{s N_{M+1}}{N_M} \right],\alpha,N_{M+1} \right).
\end{eqnarray*}
Thus by $N_{M+1}/N_M \to 1$ we have
$$
\lim_{N \to \infty} R_2 \big([-s,s],\alpha,N \big) \to 2s \qquad \textrm{as $N \to \infty$},
$$
except for a set of $\alpha$'s which has Lebesgue measure zero (and which depends on $s$). Finally, to obtain a result for \emph{all} possible values of $s$ rather than a single fixed value of $s$, we repeat the whole argument for all $s$ from a dense subset of $\mathbb{R}^+$. Then the total exceptional set of $\alpha$'s still has measure zero, and we obtain the desired result.

\section{Proof of Hausdorff Estimate} \label{sec:proof2}

The proof will proceed by contradiction. Let $E$ denote the exceptional set from the statement of the theorem. Then by Lemma~\ref{lemmah} there exist a positive measure $\mu$ supported on $E$ and a number $\nu$ satisfying
\begin{equation} \label{nusize}
\nu > \frac{d+3-\ve}{d+3}
\end{equation}
such that $\mu([x,y]) \leq (y-x)^\nu$ for all intervals $[x,y] \subset [0,1]$. In the sequel we will use estimates similar to those in Section~\ref{sec:proof} to prove that for $\mu$-almost all $\alpha \in E$ the distribution of the pair correlations of $\{\langle \alpha a(x) \rangle\}_{x \geq 1}$ is asymptotically Poissonian. This clearly is a contradiction, which proves the second part of Theorem~\ref{th1}.\\


Using the same notation as in Section~\ref{sec:proof}, by applying Minkowski's inequality to~\eqref{line*} and~\eqref{line1} we have
\begin{eqnarray} 
& & \left( \int_0^1 \left( R_2 \big([-s,s],\alpha,N\big)  -  \frac{2(N-1)s}{N} \right)^2 ~d\mu(\alpha) \right)^{1/2} \label{gmint}\\
& \leq & \sum_{m=0}^\infty \left( \frac{1}{N^2} \int_0^1 \left| \sum_{\substack{1 \leq x,y \leq N,\\x \neq y}}  \sum\mathop{}_{\mkern-5mu m} ~c_n e( n \alpha (a(x) - a(y))) \right|^2 d\mu(\alpha) \right)^{1/2}, \nonumber
\end{eqnarray}
where (here and in the subsequent formula) the sum $\sum\mathop{}_{\mkern-5mu m}$ is extended over those integers $n$ which satisfy $(2^{m} - 1) N < |n| \leq (2^{m+1}-1)N$. To denote the integrands appearing above we define the functions
$$g_m(\alpha) = \left| \sum_{\substack{1 \leq x,y \leq N,\\x \neq y}} ~\sum\mathop{}_{\mkern-5mu m}~ c_n e( n \alpha (a(x) - a(y))) \right|^2, \qquad m \geq 0. $$

Proceeding as in the proof of Lemma~\ref{lem:L2est} we can show that there is an (arbitrarily small) constant $\eta_1>0$ such that
\begin{equation} \label{integrand}
\frac{1}{N^2} \int_0^1 g_m(\alpha) ~d\alpha \ll N^{-\ve+\eta_1} 2^{-m}.
\end{equation}
Indeed when $m=0$ the argument carries over verbatim. For $m \geq 1$ it suffices to show that
 
\begin{eqnarray}\label{cntoGCDm}\sum_{\substack{(2^{m} - 1) N < |n_1|,|n_2| \leq (2^{m+1}-1)N, \\ n_1v = n_2 w}} |c_{n_1} c_{n_2}| \ll 2^{-m} \frac{\gcd(v,w)}{\sqrt{|vw|}}, \qquad v,w \neq 0.
\end{eqnarray}

Recalling $n_1 v = n_2 w$ if and only if $n_1 = \frac{h w}{\gcd(v,w)}$ and $n_2 = \frac{h v}{\gcd(v,w)}$ for some integer $h$, using the restrictions on $n_1$ and $n_2$ we have that
$$h \geq (2^{m}-1) N \frac{\gcd(v,w)}{\min(v,w)}.$$
Thus one can proceed with the argument in case three above. More precisely, using inequality~\eqref{eq3}, one has
\begin{eqnarray*}
\sum_{\substack{(2^{m} - 1) N < |n_1|,|n_2| \leq (2^{m+1}-1)N, \\ n_1v = n_2 w}} |c_{n_1} c_{n_2}| 
& \ll & \sum_{h \geq (2^m-1) N \frac{\gcd(v,w)}{\min(v,w)} } \frac{\gcd(v,w)^2}{h^2 |vw|} \\
& \ll & \frac{\min(v,w)}{(2^m-1)N \gcd(v,w)} \frac{\gcd(v,w)^2}{|vw|} \\
& \ll & 2^{-m}\frac{\gcd(v,w)}{\sqrt{|vw|}},
\end{eqnarray*}
which give~\eqref{cntoGCDm}.
Note that in equation~\eqref{integrand}, the integration is carried out with respect to the Lebesgue measure, as in~\eqref{line1}. Thus to obtain an upper bound for~\eqref{gmint} we have to transform the estimate for $\int g_m(\alpha) d\alpha$ into an estimate for $\int g_m (\alpha) d\mu(\alpha)$.\\

By the growth condition on $a(x)$ and by~\eqref{fours} we have
\begin{equation} \label{deriv}
\|g_m'\|_\infty \leq K 2^m N^{d+5}
\end{equation}
for some universal positive constant $K$. Note also that
\begin{equation} \label{gsizemax}
\|g_m\|_\infty \ll N^4
\end{equation}
as a consequence of~\eqref{fours}. Let $R = 6 \log N$. Then for sufficiently large $N$ by~\eqref{gsizemax} we have $2^R \geq \|g_m\|_\infty$. By~\eqref{nusize} we have $\nu > \frac{d+3-\ve}{d+3}$, so there exist $\eta_2,\eta_3>0$ such that $d+3-\ve+\eta_1+\eta_2 < \nu (d+3+\eta_2)$. Take also $\eta_3=\eta_3(m)=2^ {-m}$.\\

Let $r \in \{0, \dots, R\}$. We split $[0,1]$ into $\lceil K 2^m N^{d+3+\eta_2} \rceil$ equally spaced subintervals, and let $B_{m,r}$ denote the collection of all those intervals which contain a point $\alpha$ where
$$
g_m(\alpha) \in \left[2^r N^{2-\eta_2} 2^{-\eta_3 m},2^{r+1} N^{2-\eta_2} 2^{-\eta_3 m} \right].
$$
Note that by~\eqref{deriv} and the mean value theorem for any other point $\hat{\alpha}$ in the same subinterval of $B_{m,r}$ as $\alpha$ we have
\begin{equation} \label{gsize}
|g_m(\hat{\alpha}) - g_m(\alpha) | \ll K 2^m N^{d+5} K^{-1} 2^ {-m-2} N^{-d-3-\eta_2} = \frac{N^{2-\eta_2}}{4}.
\end{equation}
Since $r - \eta_3 m \geq -1$, this shows that
$$
2^{r-1} N^{2-\eta_2} 2^{-\eta_3 m} \leq g_m \leq 2^{r+2} N^{2-\eta_2} 2^{-\eta_3 m}
$$
in the whole subinterval of $B_{m,r}$ containing $\alpha$. Thus, by~\eqref{integrand}, the number of such subintervals of $B_{m,r}$ is $\ll 2^{-r} N^{d+3-\ve+\eta_1+2\eta_2} 2^{\eta_3 m}$. Note that by our choice of $R$  and by~\eqref{gsize} we have
$$
\int_0^1 g_m(\alpha) d\mu(\alpha) \ll \sum_{r=0}^R 2^{r} N^{2-\eta_2} 2^{-\eta_3 m} \mu(B_{m,r})$$ 
$$ + N^{2-\eta_2} 2^{-\eta_3 m} \mu \left([0,1] \backslash \bigcup_{r=0}^R B_{m,r}\right).$$
Furthermore, using the properties of $\mu$ given in Lemma~\ref{lemmah}, we have
$$
\mu (B_{m,r}) \ll 2^{-r} N^{d+3-\ve+\eta_1+2\eta_2} 2^{\eta_3 m} \left(2^m N^{d+3+\eta_2}\right)^{-\nu}.
$$
Thus we have
$$ \frac{1}{N^2} \int_0^1 g_m(\alpha) ~d\mu(\alpha) \ll $$
\begin{eqnarray*}
 \sum_{r=0}^R N^{d+3-\ve+\eta_1+\eta_2} \left(2^m N^{d+3} N^{\eta_2}\right)^{-\nu}  + N^{-\eta_2} 2^{-\eta_3 m}.
\end{eqnarray*}
The quantity on the right-hand side of this equation is summable in $m$. Due to the small choice of the constants $\eta_1,\eta_2>0$ there exists $\eta_4 = \frac{1}{2} \min \big( \eta_2,\nu(d+3+\eta_2) - (d+3-\ve+\eta_1+\eta_2)\big)>0$ such that 
\begin{equation} \label{eta5}
\int_0^1 \left( R_2 \big([-s,s],\alpha,N\big)  -  \frac{2(N-1)s}{N} \right)^2 ~d\mu(\alpha) \ll N^{-\eta_4} \log N.
\end{equation}
Using~\eqref{eta5} in place of Lemma~\ref{lem:L2est}, we can proceed as in the proof of Theorem~\ref{th1} to show that the asymptotic distribution of the pair correlations is Poissonian for almost all $\alpha$ with respect to $\mu$. However, this is in contradiction with the fact that $\mu$ is supported on a set where the distribution of pair correlations is \emph{not} asymptotically Poissonian and establishes the Hausdorff dimension estimate.

\section{Appendix (by Jean Bourgain)}

The first problem stated at the end of the introduction asks whether  a sequence which has additive energy of maximal order may have the metric pair correlation property. To show that the answer is negative, we start by recalling the Balog--Szem\'eredi--Gowers lemma (see for example~\cite[Section 2.5]{tv}). We write $B-B$ for the difference set $\{b_1-b_2:~b_1,b_2 \in B\}$ of a set $B$, and $|B|$ for the cardinality of $B$.

\begin{lemma} \label{lem:BSG} Let $A \subset \mathbb{Z}$ be a finite set of integers. For any $c >0$ there exist $c_1, c_2 >0$ depending only on $c$ such that the following holds. If $E(A) \geq c |A|^3$,
then there is a subset $B \subset A$ such that
\begin{enumerate}[(i)]
\item \quad $|B| \geq c_1 |A|,$
\item \quad $|B-B | \leq c_2 |A|. $
\end{enumerate}
\end{lemma}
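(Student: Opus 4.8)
The plan is to reduce this additive statement to the graph‑theoretic version of the Balog--Szem\'eredi--Gowers theorem, using the ``popular'' differences of $A$ as the edges of a dense graph and then reading the graph conclusion back as additive information about $A$.

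First I would pass to popular differences. Write $r(v) := \#\{(a,a') \in A^2 : a - a' = v\}$, so that $\sum_v r(v) = |A|^2$, $r(v) \le |A|$, and $\sum_v r(v)^2 = E(A) \ge c|A|^3$. The differences $v$ with $r(v) < \tfrac{c}{2}|A|$ contribute at most $\tfrac{c}{2}|A|\sum_v r(v) = \tfrac{c}{2}|A|^3$ to this sum, so the set $P := \{v : r(v) \ge \tfrac{c}{2}|A|\}$ satisfies $\sum_{v \in P} r(v) \ge \tfrac{c}{2}|A|^2$, and $P=-P$. I would then form the graph $G$ on the vertex set $A$ in which $a$ and $a'$ are joined exactly when $a-a'\in P$; the estimate above shows $G$ has at least $c'|A|^2$ edges, with $c'$ depending only on $c$ (for $|A|$ bounded in terms of $c$ the lemma is trivial: take $B=A$, since then $|B-B|\le |A|^2$, enlarging $c_2$ accordingly).

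Next I would invoke the graph‑theoretic Balog--Szem\'eredi--Gowers theorem (as in~\cite[Section 2.5]{tv}): a graph on $|A|$ vertices with at least $c'|A|^2$ edges contains a vertex set $B$ with $|B|\ge c_1|A|$ such that any two $b_1,b_2\in B$ are joined by at least $c_1'|A|^2$ walks of length three in $G$, where $c_1,c_1'>0$ depend only on $c'$, hence only on $c$. This already gives $|B|\ge c_1|A|$. To bound $|B-B|$: for $d\in B-B$ fix a representation $d=b_1-b_2$ with $b_i\in B$; every length‑three walk $b_1,u,v,b_2$ yields $d=(b_1-u)+(u-v)+(v-b_2)$ with all three summands in $P$, and since $b_1,b_2$ are fixed the walk is recovered from the ordered triple of summands, so $d$ has at least $c_1'|A|^2$ representations as an ordered sum $p_1+p_2+p_3$ with $p_i\in P$. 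Each $p_i$ in turn has at least $\tfrac{c}{2}|A|$ representations $p_i=x_i-y_i$ with $x_i,y_i\in A$, and distinct triples $(p_1,p_2,p_3)$ give disjoint families of sextuples, so $d$ has at least $c_1'(\tfrac{c}{2})^3|A|^5$ representations $d=(x_1+x_2+x_3)-(y_1+y_2+y_3)$ with all six variables in $A$. The sets of sextuples associated with different values $d\in B-B$ are pairwise disjoint subsets of $A^6$, so $|B-B|\cdot c_1'(\tfrac{c}{2})^3|A|^5\le |A|^6$, i.e.\ $|B-B|\le c_2|A|$ with $c_2:=\big(c_1'(\tfrac{c}{2})^3\big)^{-1}$ depending only on $c$.

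The hard part is the graph‑theoretic input of the third paragraph. If one does not wish to quote it, it must be proved, and the standard route is a \emph{dependent random choice}: pass to the common neighbourhood of a random short walk and delete the few vertices lying in ``bad'' pairs (those joined by too few length‑three walks); this is where one must keep careful track of how the constants degrade. Everything else is elementary, the only mild care being in the translation step, where one checks that the successive choices — first the walk, then a representation of each $p_i$ — genuinely produce distinct sextuples, so that the final counting is not spoiled by overcounting.
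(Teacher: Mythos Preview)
Your proposal is correct: the reduction to popular differences, the dense graph $G$, the invocation of the graph-theoretic Balog--Szem\'eredi--Gowers lemma, and the counting of sextuples to bound $|B-B|$ are all carried out properly, and the constants depend only on $c$ as required. The paper does not actually prove this lemma but simply cites it as the Balog--Szem\'eredi--Gowers lemma from \cite[Section 2.5]{tv}; your sketch is precisely the standard proof recorded there, so you are in full agreement with the paper's (outsourced) argument.
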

Next, observe that for a set of nonzero integers $S$ and any $\varepsilon > 0$ we have
\begin{eqnarray*} 
& & \textup{mes} \left( \left\{  \alpha \in [0,1] :~ \min_{n \in S} \| \left<n\alpha \right> \| < \frac{\varepsilon}{|S|} \right\} \right) \\
& \leq & \sum_{n \in S} \textup{mes} \left( \left\{ \alpha \in [0,1] :~ \| \left<n\alpha \right> \| < \frac{\varepsilon}{|S|}  \right\} \right) \leq 2 \varepsilon,
\end{eqnarray*}
where $\textup{mes}$ denotes Lebesgue measure. This immediately implies the following lemma.
\begin{lemma}\label{lem:meas}Let $B \subset \mathbb{Z}$ be a finite set of integers. Then for every $\varepsilon \in (0,1)$ we have
$$ \textup{mes}  \left( \left\{ \alpha \in [0,1] :~ \min_{\substack{m,n \in B \\ m \neq n}} \| \left<m \alpha \right> -  \left<n \alpha \right> \| < \frac{\varepsilon}{|B-B|}  \right\} \right) \leq 2 \varepsilon. $$
\end{lemma}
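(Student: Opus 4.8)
The plan is to deduce Lemma~\ref{lem:meas} directly from the elementary estimate displayed just before it. First I would observe that for any integers $m,n$ and any real $\alpha$ the difference $\langle m\alpha\rangle - \langle n\alpha\rangle$ is congruent modulo $1$ to $(m-n)\alpha$, so that $\|\langle m\alpha\rangle - \langle n\alpha\rangle\| = \|(m-n)\alpha\|$. Consequently, as the pair $(m,n)$ runs over all pairs of distinct elements of $B$, the integer $m-n$ runs over the set $S := (B-B)\setminus\{0\}$, and for every $\alpha$ one has $\min_{m\ne n,\ m,n\in B}\|\langle m\alpha\rangle - \langle n\alpha\rangle\| = \min_{v\in S}\|v\alpha\|$. (If $|B|\le 1$ the set $S$ is empty and the set appearing in the lemma is empty, so there is nothing to prove; hence we may assume $|B|\ge 2$, in which case $S$ is a nonempty finite set of nonzero integers.)

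Next I would apply the preceding inequality to this set $S$. Since $|S| = |B-B|-1 \le |B-B|$, we have $\varepsilon/|B-B| \le \varepsilon/|S|$, and therefore the set in the statement of the lemma is contained in $\{\alpha\in[0,1] : \min_{v\in S}\|v\alpha\| < \varepsilon/|S|\}$. By the union bound this last set has measure at most $\sum_{v\in S}\textup{mes}(\{\alpha\in[0,1] : \|v\alpha\| < \varepsilon/|S|\}) \le |S|\cdot (2\varepsilon/|S|) = 2\varepsilon$, using that $\{\alpha\in[0,1] : \|v\alpha\|<\delta\}$ has measure at most $2\delta$ for every nonzero integer $v$. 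This yields the bound $2\varepsilon$ claimed in the lemma.

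There is no genuine obstacle in this argument; it is essentially a restatement of the inequality preceding the lemma. The only two points that warrant a moment's care are the identity $\|\langle m\alpha\rangle - \langle n\alpha\rangle\| = \|(m-n)\alpha\|$ (so that the min over distinct pairs in $B$ becomes a min over $S$) and the trivial bookkeeping that deleting $0$ from $B-B$ can only decrease the cardinality, so that the denominator $|B-B|$ in the lemma is at least as large as the $|S|$ that appears when one invokes the union-bound estimate. Hence the factor $2$ on the right-hand side of the lemma is exactly the factor $2$ coming from that estimate.
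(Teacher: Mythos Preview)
Your argument is correct and is exactly the approach of the paper, which simply states the union-bound estimate for a set $S$ of nonzero integers and remarks that the lemma follows immediately. You have filled in precisely the two details the paper leaves implicit: the identity $\|\langle m\alpha\rangle-\langle n\alpha\rangle\|=\|(m-n)\alpha\|$ and the bookkeeping $|S|=|(B-B)\setminus\{0\}|\le |B-B|$.
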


Now let an infinite sequence $(a(x))_{x \geq 1}$ be given, and assume that there exists a constant $c>0$ such that $E(A_N)>c N^ 3$ for infinitely many $N$. Let $N$ be an index for which this is true, and let $c_1,c_2$ be the constants and $B_N$ be the corresponding set as given by Lemma~\ref{lem:BSG}. Set $\varepsilon = \frac{1}{10} c_1^2$. It follows from Lemma~\ref{lem:meas} that there exists a set $\Omega_{\varepsilon} \subset [0,1]$ with $\textup{mes} (\Omega_{\varepsilon}) \leq 2 \varepsilon$ such that for all $m \neq n \in B_N$ we have

\begin{equation}\label{eq:diff}
\| \left< m \alpha \right> - \left< n \alpha \right> \| \geq \frac{\varepsilon}{c_2 N}
\end{equation}
for all $\alpha \not\in \Omega_{\varepsilon}$. Taking $s = \frac{\varepsilon}{2c_2}$ and setting 
$$
\mathcal{D}_N = \left\{ (m,n) \in (A_N \times A_N) \setminus (B_N \times B_N), ~m \neq n\right\},
$$
by~\eqref{eq:diff} it follows that for $\alpha \notin \Omega_\varepsilon$ we have 
\begin{eqnarray*}
R_{2}\left([-s,s], \alpha, N\right) & = & \frac{1}{N} \left| \left\{ (m, n) \in \mathcal{D}_N:~ \| \left< m \alpha \right> - \left< n \alpha \right> \| \leq \frac{s}{N} \right\}  \right|.
\end{eqnarray*}
By Lemma~\ref{lem:BSG} we have
\begin{eqnarray*}
& & \int_{0}^{1}\frac{1}{N} \left| \left\{ (m, n) \in \mathcal{D}_N :~ \| \left< m \alpha \right> - \left< n \alpha \right> \| \leq \frac{s}{N} \right\}  \right| d \alpha \\ 
& = & \frac{1}{N} \left( N^2 - |B_N|^2 \right) \frac{2s}{N} \\
& \leq & 2 \left(1 - c_1^2 \right) s.
\end{eqnarray*}
Thus there exists $\Omega' \subset [0,1]$ with $\textup{mes} (\Omega') \geq 1 - \frac{1-c_1^2}{1-c_1^2 /2} \geq \frac{c_1^2}{2}$ such that for $\alpha \in \Omega' \subset [0,1]$  we have
\begin{eqnarray*}
\frac{1}{N} \left| \left\{ (m, n) \in \mathcal{D}_N : \| \left< m \alpha \right> - \left< n \alpha \right> \| \leq \frac{s}{N} \right\}  \right| 
& \leq & 2 \left( 1 - \frac{c_1^2}{2}\right)s. 
\end{eqnarray*}
Therefore, for $\alpha \in \left(\Omega' \setminus \Omega_\varepsilon \right)$ we have
\begin{equation} \label{b7}
R_2([-s,s],\alpha,N) \leq 2 \left( 1 - \frac{c_1^2}{2}\right) s.
\end{equation}
From the choice of $\varepsilon$ we have
$$
\textup{mes} \left(\Omega' \setminus \Omega_\varepsilon \right) \geq \frac{c_1^2}{2} - 2 \varepsilon > \frac{c_1^2}{4}.
$$
Consequently, for a set of measure at least $\frac{c_1^2}{4}$ inequality~\eqref{b7} holds for infinitely many $N$. Thus the answer to the question in the first problem at the end of the introduction is negative.\\

Now we come to the second problem, which asks whether it is possible to relax the condition $E(A_N) \ll N^{3 - \varepsilon}$ from the statement of Theorem~\ref{th1} to $E(A_N) = o(N^3)$ (which would then be optimal, in light of the negative answer to the first problem). To construct a counterexample, let $K_N$ be a very slowly growing integer-valued function of $N$. Let $A_N$ denote a random subset of $\{K_N N +1,K_N N + 2,\ldots,2 K_N N \}$, which is obtained by setting $A_N= \{K_N N + n: ~1 \leq n \leq K_N N \text{ and }~\xi_n^{(N)}(\omega)=1\}$, where $\xi_1^{(N)}, \dots, \xi_{K_N N}^{(N)}$ are independent, $\{0,1\}$-valued random variables with mean $1/K_N$. 

\begin{lemma}\label{lem:RanEner} With positive probability all the following three properties hold.
\begin{enumerate}[(i)]
\item \quad For all $k \in \mathbb{Z}\setminus \{0\}$ we have $|A_N \cap \left(A_N +k \right)| \leq \frac{2N}{K_N},$ 
\item \quad For all $k \in \mathbb{Z}\setminus \{0\},~|k| < \frac {K_N N}{10}$, we have $|A_N \cap \left(A_N +k \right)| > \frac{N}{2 K_N}.$ 
\item \quad We have $N/2 \leq |A_N| \leq 2 N$.
\end{enumerate}
\end{lemma}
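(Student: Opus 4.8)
The plan is to show that for all sufficiently large $N$ each of the three properties fails only with a probability that is $o(1)$, so that a union bound leaves a probability bounded away from $0$ on which all of (i)--(iii) hold simultaneously; this is all that is needed (the construction is only applied along a sparse sequence of indices $N$). Write $\xi_n := \xi_n^{(N)}$, and for $k \in \Z\setminus\{0\}$ put
\[
X_k \;:=\; \bigl|A_N \cap (A_N+k)\bigr| \;=\; \sum_{\substack{1 \le n \le K_N N\\ 1 \le n-k \le K_N N}} \xi_n\,\xi_{n-k},
\]
which is a sum of $L_k := \max(0,\,K_N N - |k|)$ many $\{0,1\}$-valued products, each of mean $K_N^{-2}$; hence $\E X_k = L_k K_N^{-2} \in [0,\,N/K_N]$, and $\E X_k \ge \tfrac{9}{10}\,N/K_N$ as soon as $|k| < K_N N/10$. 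Also $|A_N| = \sum_{n=1}^{K_N N}\xi_n$ has mean $N$.

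Property (iii) follows from a Chernoff bound applied to the sum of independent Bernoulli variables $|A_N|$: $\p(|A_N|\notin[N/2,2N]) \le 2e^{-cN}$ for an absolute $c>0$. For (i) and (ii) the key observation is that $X_k$, viewed as a function of the independent coordinates $\xi_1,\dots,\xi_{K_N N}$, has bounded differences: changing a single $\xi_j$ changes $X_k$ by at most $2$, since $\xi_j$ occurs in at most the two products $\xi_j\xi_{j-k}$ and $\xi_{j+k}\xi_j$. McDiarmid's bounded-differences inequality therefore gives, for every $t>0$,
\[
\p\bigl(X_k - \E X_k \ge t\bigr) \le \exp\!\Bigl(-\tfrac{t^2}{2K_N N}\Bigr), \qquad \p\bigl(\E X_k - X_k \ge t\bigr) \le \exp\!\Bigl(-\tfrac{t^2}{2K_N N}\Bigr).
\]
For (i), if $L_k \le 2N/K_N$ the bound $X_k \le L_k \le 2N/K_N$ is automatic, while if $L_k > 2N/K_N$ then $\E X_k \le N/K_N$ and taking $t = N/K_N$ gives $\p(X_k > 2N/K_N) \le \exp(-N/(2K_N^3))$; summing over the fewer than $2K_N N$ values $1 \le |k| \le K_N N - 1$ (for $|k|\ge K_N N$ the intersection is empty) this still tends to $0$. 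For (ii), for each $k$ with $0 < |k| < K_N N/10$ we have $\E X_k - N/(2K_N) \ge \tfrac{2}{5}\,N/K_N$, so taking $t=\tfrac{2}{5}\,N/K_N$ gives $\p(X_k \le N/(2K_N)) \le \exp(-\tfrac{2}{25}\,N/K_N^3)$, and summing over the fewer than $K_N N/5$ such $k$ again tends to $0$. Combining, the probability that some property among (i)--(iii) fails is $O\bigl(K_N N\exp(-c\,N/K_N^{3})\bigr) + 2e^{-cN}$, which tends to $0$ provided $K_N = o\bigl((N/\log N)^{1/3}\bigr)$ — in particular whenever $K_N$ grows as slowly as we assumed — so for $N$ large the failure probability is $<1$, which is the assertion.

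The only real point of care is that the summands $\xi_n\xi_{n-k}$ of $X_k$ are \emph{not} independent: their dependency graph has maximum degree $2$ (it is a disjoint union of paths running through the residue classes modulo $|k|$), so the naive Chernoff bound for sums of independent variables does not apply. McDiarmid's inequality handles this automatically; alternatively one may split $X_k$ into the two subsums supported on the even- and odd-numbered vertices of those paths, each of which \emph{is} a sum of independent Bernoulli products, and apply the multiplicative Chernoff bound to each. The remaining steps — the Chernoff bound for $|A_N|$, the trivial regime where $L_k$ is small, and the arithmetic of the union bounds — are routine.
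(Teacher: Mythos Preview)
Your proof is correct and follows the same overall strategy as the paper: express $|A_N \cap (A_N+k)|$ as the autocorrelation sum $\sum_n \xi_n\xi_{n-k}$, compute its expectation, invoke a concentration inequality, and take a union bound over~$k$. The paper's own argument is only a sketch: it names the relevant quantity (the \emph{peak sidelobe level}), alludes to a blocking argument in which the index set is cut into pieces so that most pairs of summands become independent, and then defers the details to the literature on random binary sequences. Your use of McDiarmid's bounded-differences inequality is a cleaner, fully self-contained substitute --- the observation that flipping a single $\xi_j$ changes $X_k$ by at most $2$ bypasses the dependency issue without any decoupling. The price is that McDiarmid uses the crude variance proxy $\sum c_i^2 \asymp K_N N$ rather than the true variance $\asymp N/K_N$, so your bound $\exp(-cN/K_N^3)$ is weaker than the $\exp(-cN/K_N)$ one gets from the blocking/Chernoff route (or from your own two-colouring alternative in the last paragraph); but since $K_N$ is assumed to grow arbitrarily slowly, this costs nothing for the lemma as stated.
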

\begin{proof}
For $k \neq 0$ we have $\mathbb{E} \left(\xi_n^{(N)} \xi_{n-k}^{(N)}\right)=K_N^{-2}$. By construction, for $k \neq 0$, we have
\begin{equation} \label{autocorr}
|A_N \cap (A_N + k)| = \sum_{\max (1,1+k) \leq n \leq \min (K_N N,K_N N + k)} \xi_n^{(N)} \xi_{n-k}^{(N)}.
\end{equation}
The expression on the right-hand side of~\eqref{autocorr} is a random variable whose expected value is
\begin{equation} \label{b4}
\frac{|n:~\max (1,1+k) \leq n \leq \min (K_N N,K_N N + k)|}{K_N^ 2} \leq \frac{N}{K_N}.
\end{equation}
One can use the concentration of measures phenomenon and large deviation inequalities to prove that the probability of observing a value of~\eqref{autocorr} which is far from its mean is very small. More precisely, the quantity on the right-hand side of~\eqref{autocorr} is called the \emph{aperiodic autocorrelation at shift $(-k)$} of the sequence $\xi_1^{(N)}, \dots, \xi_{K_N N}^{(N)}$, and the supremum of its modulus (taken over all admissible values of $k$) is called the \emph{peak sidelobe level}. These are notions that have been studied intensively for random binary sequences. It is known that the distribution of the peak sidelobe level is strongly concentrated, which follows roughly speaking from the fact that when the total index set is cut into several pieces, then only random variables from the same or from neighboring segments are dependent, while all others are mutually independent. A detailed proof of assertion (i) of the lemma could be given using the methods from~\cite{alon,schmidt}.\\

Furthermore, if $|k| < \frac{K_N N}{10}$, then the left-hand side of~\eqref{b4} is at least $\frac{3N}{4 K_N}$, and again distributional considerations imply assertion (ii) of the lemma with large probability. Property (iii) also is true with large probability, again as a consequence of large deviation bounds.
\end{proof}

In the sequel, assume that $A_N$ denotes a specific realization of a sequence as described above, which satisfies all the three assertions of Lemma~\ref{lem:RanEner}. It follows from (i) of Lemma~\ref{lem:RanEner} that 
\begin{eqnarray*}
E(A_N) & = & \sum_{|k| \leq K_N N} |A_N \cap \left(A_N + k \right)|^2 
\\
& \leq & 2 K_N N \left(\frac{2 N}{K_N}\right)^2 \\
& = & \frac{8 N^3}{K_N} = o(N^{3}). 
\end{eqnarray*}
By a similar reasoning we may actually assume that a corresponding estimate for the additive energy holds uniformly along all initial segments of $A_N$. Next, for any $\alpha \in [0,1]$, using assertions (ii) and (iii) of Lemma~\ref{lem:RanEner}, for the pair correlations of this sequence we have
\begin{eqnarray}
R_{2}\left([-1,1], \alpha, A_N \right) & = & \frac{1}{|A_N|} \sum_{k \neq 0 }|A_N \cap \left(A_N + k \right)|~ \mathbf{1}_{\left(\|k \alpha\| \leq \frac{1}{|A_N|} \right)} \nonumber\\
& \geq & \frac{1}{2N} \frac{N}{2 K_N}  \sum_{0 < |k| < \frac{k_N N}{10}}~\mathbf{1}_{\left(\|k \alpha\| \leq \frac{1}{2N} \right)}\nonumber\\
& = & \frac{1}{2 K_N} \sum_{0 < k < \frac{K_N N}{10}} \mathbf{1}_{\left(\|k \alpha\| \leq \frac{1}{2N}\right)}, \label{b26}
\end{eqnarray}
where $\mathbf{1}$ denotes the indicator function. Let $S_N$ denote the set
$$
\left\{ \alpha \in [0,1]:~\left| \alpha - \frac{p}{q} \right| < \frac{1}{K_N^2 N^2} \text{ for some } 0 < q < \frac{N}{K_N},~(p,q)=1 \right\}.
$$
We have
$$
\textup{mes} (S_N) \gg \left(\frac{N}{K_N}\right)^2 \frac{1}{K_N^2 N^2} = \frac{1}{K_N^4},
$$
using well-known estimates for the average order of the Euler totient function (see for example~\cite{wal}). Also, for $\alpha \in S_N$ we clearly have the lower bound
\begin{equation} \label{b29}
\frac{1}{2K_N} \frac{K_N^2}{10} \gg K_N
\end{equation}
for the expression in line~\eqref{b26}. Now consider only indices $N$ along an extremely thin subsequence $(N_j)_{j \geq 1}$ of $\mathbb{N}$, and assume that $K_N$ increases so slowly with $N$ that 
\begin{equation} \label{b10}
\sum_{j=1}^\infty \frac{1}{K_{N_j}^4} = \infty. 
\end{equation}
The fast growth of $N_j$ allows us to consider the sets $S_{N_j}$ as being essentially independent.\footnote{The required ``almost independence'' property can be deduced from the fact that the Farey fractions are asymptotically equidistributed. Precise discrepancy estimates for the Farey fractions are known (see~\cite{dress,nied}), which could be used to obtain a quantitative version of this proof. The ``appropriate'' version of the Borel--Cantelli lemma mentioned in the next sentence could be for example the Erd\H os--R\'enyi version, see e.g.~\cite[p. 391]{renyi}.} Hence, by~\eqref{b10} and an appropriate version of the second Borel--Cantelli lemma, the limsup set
$$
S = \bigcap_{j_0 \geq 1} \bigcup_{j \geq j_0} S_{N_j}
$$
has full measure. Defining $(a(x))_{x \geq 1}$ as the infinite sequence whose elements are all the numbers contained in $\bigcup_{j \geq 1} A_{N_j}$, sorted in increasing order, it follows from~\eqref{b26} and~\eqref{b29} that 
$$
\limsup_{j \to \infty} R_2([-1,1],\alpha,N_j) = \infty
$$
for all $\alpha \in S$. Thus $(a(x))_{x \geq 1}$ fails to have the metric pair correlation property despite satisfying $E(A_N) = o(N^3)$, thereby giving a negative answer to the question in the second problem.


\end{document}